\begin{document}
\title{Nagata embedding and $\scr{A}$-schemes}
\author{Satoshi Takagi%\thanks{\textit{Date}: June 30, 2011}
}
\date{July 19, 2011}
\maketitle

\begin{abstract}
We define the notion of normal $\scr{A}$-schemes,
and approximable $\scr{A}$-schemes.
Approximable $\scr{A}$-schemes inherit
many good properties of ordinary schemes.
As a consequence, we see that the Zariski-Riemann space
can be regarded in two ways -- either as the limit space
of admissible blow ups, or as the universal compactification
of a given non-proper scheme.
We can prove Nagata embedding
using Zariski-Riemann spaces.
\end{abstract}

\tableofcontents

%\footnote{\textit{Date}: June 30, 2011}

\setcounter{section}{-1}
\section{Introduction}
We introduced the concept of $\scr{A}$-schemes
in \cite{TakZR}. In this paper, we will investigate further
properties of $\scr{A}$-schemes, mainly focusing
on Zariski-Riemann spaces.

First, we will show that there is a normalization
of $\scr{A}$-schemes, just as for ordinary schemes.
This is important, since we are aiming for an analog
of Zariski's main theorem.

One of the advantage of introducing $\scr{A}$-schemes
is that we can simplify the proof of Nagata embedding theorem;
it can be proven intuitively,
as in the original paper of Nagata \cite{Nagata}.
Note that the essential part is already proven
in Corollary 4.4.6 of \cite{TakZR}.
Compare with the proof of Conrad \cite{Conrad},
which only uses ordinary schemes, but
is long (approximately 50 pages).

Also, we introduce the notion of approximable $\scr{A}$-schemes:
an $\scr{A}$-scheme is approximable if it is a (filtered)
projective limit of ordinary schemes.
This notion is convenient, since locally free sheaves
on approximable schemes always come from a pull
back of a localy free sheaves on ordinary schemes.
At the same time, 
we see that the Zariski-Riemann space defined in \cite{TakZR}
is identified with the conventional one, namely the
limit space of $U$-admissible blow ups
along the exceptional locus $X \setminus U$
where $U$ is an open subscheme of a scheme $X$.
This shows that the conventional Zariski-Riemann space
has the desirable universal property in the category
of $\scr{A}$-schemes, not only with schemes.

This paper is organized as follows.
In section 1, we quickly review the definitions
and properties of $\scr{A}$-schemes,
which plays the central role in this paper.
In section 2, we construct the normalization functor
of $\scr{A}$-schemes.
In section 3, we give a notion of approximable
$\scr{A}$-schemes. This actually determines
the complete hull of the category $\cat{$\scr{Q}$-Sch}$
of ordinary schemes in the category $\cat{$\scr{A}$-Sch}$
of $\scr{A}$-schemes, namely the smallest complete
full subcategory of $\cat{$\scr{A}$-Sch}$ containing $\cat{$\scr{Q}$-Sch}$.
In section 4, we will give a proof of the original
version of Nagata embedding, which says
that any separated scheme of finite type 
can be embedded as an open subscheme
of a proper scheme.

\textbf{Notation and conventions.}
In this paper, the algebraic type $\scr{A}$
is always that of rings when we talk of $\scr{A}$-schemes.
When we say ordinary schemes,
we treat only coherent schemes
and quasi-compact morphisms between them;
to emphasize this assumption and 
to distinguish ordinary coherent schemes from $\scr{A}$-schemes,
we will say $\scr{Q}$-schemes instead of coherent schemes.

For an $\scr{A}$-scheme $X$,
the description $|X|$ stands for the underlying
topological space, which is coherent.

An $\scr{A}$-scheme $X$ will be called \textit{integral}
if it is irreducible and reduced.
This condition is in fact, stronger than
assuming any section ring $\scr{O}_{X}(U)$
is integral.

A morphism of $\scr{A}$-schemes
is \textit{proper}, if it is separated and universally closed.
We \textit{do not include the condition ``of finite type"}.

An open covering of an $\scr{A}$ scheme $X$
is denoted by $\{\amalg U_{ijk} \rightrightarrows  \amalg U_{i}\}$;
here, $\{U_{i}\}$ is a quasi-compact open covering of $X$,
and $\{U_{ijk}\}$ is a quasi-compact open covering 
of $U_{i} \cap U_{j}$ for each $i,j$.
Hence, there are open immersions $U_{ijk} \to U_{i}$
and $U_{ijk} \to U_{j}$ for each $i,j,k$.
Moreover, if $X$ is a $\scr{Q}$-scheme, we usually take
$U_{i}$ and $U_{ijk}$ as open affine subschemes of $X$.

\section{A brief review of $\scr{A}$-schemes}

In this section, we will recall some terminologies
and definitions in \cite{TakZR}.
A good reference for general lattice theories
is \cite{Stone}.

A topological space $X$ is \textit{coherent},
if it is sober, quasi-compact, quasi-separated,
and has a quasi-compact open basis.

The category $\cat{Coh}$ of coherent spaces
and quasi-compact morphisms is isomorphic
to the opposite category $\cat{DLat}^{\op}$ of distributive lattices
by the functor $C(-)_{\cpt}$;
for a coherent space $X$,
we may regard $C(X)_{\cpt}$ as the set of quasi-compact
open subsets of $X$, or the set of their complements.
A quasi-compact morphism $f:X \to Y$ induces
a morphism $f^{-1}:C(Y)_{\cpt} \to C(X)_{\cpt}$
of lattices.

Therefore, for any algebraic type $\scr{A}$,
we can regard an $\scr{A}$-valued sheaf on a coherent
space $X$ as a continuous covariant functor
$C(X)_{\cpt} \to \scr{A}$.

On a coherent space $X$,
there is a canonical $\cat{DLat}$-valued sheaf $\tau_{X}$
on $X$, which is defined by $U \mapsto C(U)_{\cpt}$
for quasi-compact open $U$;
this extends uniquely to the entire Zariski site of $X$.

We have a functor $\alpha_{1}: \cat{Rng} \to \cat{Dlat}$
from the category of commutative rings,
which sends a ring $R$ to the set of finitely generated ideals of $R$
modulo the relation $I^{2}=I$.
Note that this gives the usual spectrum of rings,
when combined with the previous isomorphism $C(-)_{\cpt}$.

Also, we have a natural homomorphism
$R \to \alpha_{1}(R)$ of multiplicative monoids,
sending $a \in R$ to the principal ideal generated by $a$.
This homomorphism commutes with localizations.

An $\scr{A}$-scheme is a triple $X=(|X|, \scr{O}_{X},\beta_{X})$,
where
\begin{enumerate}[(i)]
\item $|X|$ is a coherent space (the ``underlying space"),
\item $\scr{O}_{X}$ is a ring-valued sheaf on $|X|$
(the ``structure sheaf"), and 
\item $\beta_{X}:\alpha_{1}\scr{O}_{X} \to \tau_{X}$
is a morphism of $\cat{DLat}$-valued sheaves
(the ``support morphism"),
\end{enumerate}
which satisfies the following condition:
for an inclusion $V \hookrightarrow U$ of quasi-compact
open subsets of $|X|$, the restriction map
$\scr{O}_{X}(U) \to \scr{O}_{X}(V)$
factors through $\scr{O}_{X}(U)_{Z}$,
where $\scr{O}_{X}(U)_{Z}$ is the localization
along the multiplicative system
\[
\{a \in \scr{O}_{X}(U) \mid \beta_{X}(a) \subset Z\},
\]
where $Z=U \setminus V$ is the complement closed subset of $V$
in $U$.
By this property, $\scr{A}$-schemes are
locally ringed spaces.
A morphism of $\scr{A}$-schemes $f=(f,f^{\#}): X \to Y$
is a morphism of ringed spaces, which commutes 
with the support morphism:
\[
\xymatrix{
\alpha_{1}\scr{O}_{Y} \ar[r]^{\alpha_{1}f^{\#}} \ar[d]_{\beta_{Y}} &
f_{*}\alpha_{1}\scr{O}_{X} \ar[d]^{\beta_{X}} \\
\tau_{Y} \ar[r]_{f^{-1}} & f_{*}\tau_{X}
}
\]
The category $\cat{$\scr{A}$-Sch}$ of $\scr{A}$-schemes
is complete, and co-complete.

We have a fully faithful functor
$\cat{$\scr{Q}$-Sch} \to \cat{$\scr{A}$-Sch}$,
which preserves pull backs and finite patchings by quasi-compact opens.

Let $\Spec K \to S$ be a dominant morphism
of $\scr{A}$-schemes, where $S$ is integral and $K$ is a field.
The \textit{Zariski-Riemann space} $\ZR^{f}(K,S)$
is a proper $\scr{A}$-scheme over $S$,
defined as follows:
the points of the underlying space $|\ZR^{f}(K,S)|$
corresponds to the set of 
dominant morphisms $\scr{O}_{S,s} \to R$
of local rings, where $R$'s are valuation rings of $K$,
and $\scr{O}_{S,s}$' are local rings of $S$.
The map $|\ZR^{f}(K,S)| \to |S|$ is defined naturally.
The topology of $|\ZR^{f}(K,S)|$ is generated by
the \textit{domains} $\{R \in |\ZR^{f}(K,S)| \mid a \in R\}$
for $a \in K \setminus \{0\}$ and
the inverse images of the open subsets of $S$.
For a quasi-compact open subset $U$ of $|\ZR^{f}(K,S)|$,
$\scr{O}_{\ZR^{f}(K,S)}(U)$ is the ring
which is the intersection of all the valuation rings corresponding
to the points in $U$.

For arbitrary dominant morphism $X \to S$
of integral $\scr{A}$-schemes,
$\ZR^{f}(X,S)$ is defined by the pushout
of $X \leftarrow \ZR^{f}(K,X) \to \ZR^{f}(K,S)$,
where $K=Q(X)$ is the rational function field of $X$.
$X$ is \textit{of profinite type}
(resp. \textit{strongly of profinite type}) over $S$,
if the canonical map $\ZR^{f}(K,X) \to \ZR^{f}(K,S)$
is a scheme-theoretic immersion (resp. open immersion).

Here, we would like to list up some important properties
which is going to be used in  the sequel:
\begin{enumerate}
\item
A separated, of finite type morphism of $\scr{Q}$-schemes is always
strongly of profinite type.
\item The map $X \mapsto \ZR^{f}(X,S)$
gives the universal proper $\scr{A}$-scheme of profinite type over $S$
for each integral $\scr{A}$-scheme $X$ of profinite type over $S$.
\end{enumerate}

\section{Normalization}
In this section,
we fix an integral base $\scr{A}$-scheme $S$,
and any $\scr{A}$-scheme is integral, of profinite type over $S$.
We denote by $\cat{Int. $\scr{A}$-Sch}$
the category of integral $\scr{A}$-schemes
of profinite type over $S$ and dominant morphisms.

\begin{Def}
An $\scr{A}$-scheme $X$ is \textit{normal},
if the ring of every stalk $\scr{O}_{X,x}$
is integrally closed.
\end{Def}

\begin{Rmk}
We do not assume Noetherian property on
normal rings (or schemes) in this paper.
\end{Rmk}

\begin{Thm}
Let $\cat{N. $\scr{A}$-Sch}$
be the full subcategory of $\cat{Int. $\scr{A}$-Sch}$,
consisting of normal schemes,
and $U:\cat{N. $\scr{A}$-Sch} \to \cat{Int. $\scr{A}$-Sch}$
be the underlying functor.
Then, $U$ has a right adjoint `$\nor$'.
Moreover, the counit $\eta: U\circ \nor \Rightarrow \Id$
is proper dominant.
\end{Thm}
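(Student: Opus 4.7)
The plan is to construct $\nor(X)$ by taking integral closures of the local rings $\scr{O}_{X}(U)$ inside the function field $K = Q(X)$ and gluing, using the classical fact that integral closure commutes with localization. Fix $X \in \cat{Int. $\scr{A}$-Sch}$ and, for each quasi-compact open $U \subset |X|$, let $A_{U} := \scr{O}_{X}(U)$ and denote by $A_{U}^{\nor}$ its integral closure in $K$. First I would verify compatibility with restriction: by the defining condition of $\scr{A}$-schemes, for $V \hookrightarrow U$ the restriction $A_{U} \to A_{V}$ factors through the localization $\scr{O}_{X}(U)_{Z}$ along a multiplicative system $S$ described by $\beta_{X}$, and the elementary identity $A_{U}^{\nor}[S^{-1}] = (A_{U}[S^{-1}])^{\nor}$ (valid for any $S$) shows that the integral closures form a compatible system. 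Sheafifying and invoking the cocompleteness of $\cat{$\scr{A}$-Sch}$ yields an $\scr{A}$-scheme $\nor(X)$ with a canonical morphism $\eta_{X}: \nor(X) \to X$. Normality is then immediate: each stalk of $\scr{O}_{\nor(X)}$ is a localization of an integrally closed domain, hence integrally closed.

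For the adjunction, let $f: Y \to X$ be a dominant morphism with $Y$ normal; dominance yields an inclusion $K = Q(X) \hookrightarrow Q(Y)$ on generic fibers. For quasi-compact opens $V \subset |Y|$, $U \subset |X|$ with $f(V) \subset U$, every element of $A_{U}^{\nor} \subset K \subset Q(Y)$ is integral over $A_{U}$, hence over $\scr{O}_{Y}(V)$, and so lies in $\scr{O}_{Y}(V)$ by normality of $Y$. This produces a unique extension $A_{U}^{\nor} \to \scr{O}_{Y}(V)$ of the original ring map; naturality in $U$ and $V$, combined with cocompleteness of $\cat{$\scr{A}$-Sch}$, assembles these into the unique lift $\tilde f: Y \to \nor(X)$ with $\eta_{X} \circ \tilde f = f$, giving the adjunction bijection $\Hom_{\cat{N. $\scr{A}$-Sch}}(Y, \nor(X)) \cong \Hom_{\cat{Int. $\scr{A}$-Sch}}(U(Y), X)$. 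Finally, properness and dominance of $\eta_{X}$ are local: $A_{U} \hookrightarrow A_{U}^{\nor}$ is injective (so $\eta_{X}$ is dominant) and integral (so affine and universally closed, hence proper in the paper's finite-type-free sense).

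The main obstacle I expect is the first step: since $A_{U}^{\nor}$ is neither finitely generated nor Noetherian over $A_{U}$ in general, no Noetherian shortcut is available, and one must check carefully that $U \mapsto A_{U}^{\nor}$ genuinely assembles into an $\scr{A}$-scheme with a well-defined support morphism $\beta_{\nor(X)}$ compatible with $\beta_{X}$. A secondary subtlety is confirming that $\nor(X)$ stays of profinite type over $S$, so that it lies in $\cat{Int. $\scr{A}$-Sch}$; this should follow from the identification $\ZR^{f}(K,\nor(X)) = \ZR^{f}(K,X)$, which reflects the fact that any valuation ring of $K$ dominating a local ring of $X$ is automatically integrally closed and hence dominates the corresponding local ring of $\nor(X)$.
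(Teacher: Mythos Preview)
Your proposal follows the classical recipe for ordinary schemes---take integral closures of section rings and glue---but it has a genuine gap in the $\scr{A}$-scheme setting.  The core issue is the underlying space: $|\nor(X)|$ is in general \emph{not} $|X|$ (already for a nodal curve the normalization has two points over the node), so ``sheafifying $U\mapsto A_U^{\nor}$'' produces a sheaf on the wrong space.  One might instead try to patch the affine schemes $\Spec A_U^{\nor}$ via cocompleteness, but $\scr{A}$-schemes are not locally affine: the opens $U$ are not spectra of their section rings, and the restriction maps $A_U\to A_V$ need only \emph{factor through} a localization, so $\Spec A_V^{\nor}\to\Spec A_U^{\nor}$ has no reason to be an open immersion.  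Cocompleteness of $\cat{$\scr{A}$-Sch}$ does not by itself manufacture the correct gluing data or the support morphism $\beta_{\nor(X)}$; this is precisely the obstacle you flag, and it is not resolved by the sketch.  Your properness argument (``integral, hence affine and universally closed'') likewise presupposes an affine-local description that is unavailable here.

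The paper addresses exactly these points by building $|X^{\nor}|$ from scratch as $\Spec\scr{N}^X$, where $\scr{N}^X$ is a distributive lattice generated by pairs $(U,\alpha)$ with $\alpha\in\scr{O}_X(U)^{\nor}\setminus\{0\}$, modulo an explicit relation $\prec$---the same machinery used to construct Zariski--Riemann spaces in \cite{TakZR}.  Points of $|X^{\nor}|$ over $x\in|X|$ then correspond to primes of $\scr{O}_{X,x}^{\nor}$, the structure sheaf is defined by intersecting the resulting local rings $R_p$ inside $K$, and $\beta_{X^{\nor}}$ is given explicitly by $(a_1,\dots,a_n)\mapsto\{(U,a_i)\}_i$.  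Properness is obtained not from integrality but via the valuative criterion, using the canonical factorization $\ZR^{f}(K,X)\to X^{\nor}\to X$.  Your closing intuition that $\ZR^{f}(K,\nor(X))=\ZR^{f}(K,X)$ is exactly what the paper exploits in its Step~6, but the lattice-theoretic construction of the space that precedes it is where the real work lies and is what your outline is missing.
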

We will refer to this right adjoint as the \textit{normalization functor}.
\begin{proof}
The proof is somewhat long,
so we will divide it into several steps.
The construction of the normalization functor
is analogous to that of Zariski-Riemann spaces,
described in detail in \cite{TakZR}.
We will denote by $R^{\nor}$ the integral closure
of a given integral domain $R$ in the sequel.
\begin{itemize}
\item[Step 1:] First,
we will construct the underlying space of 
the normalization of a given integral $\scr{A}$-scheme $X$.
Let $\scr{N}^{X}_{0}$ be the set of finite sets of pairs
$(U,\alpha)$, where
\begin{enumerate}[(a)]
\item $U$ is a quasi-compact open subset of $X$, and
\item $\alpha \in \scr{O}_{X}(U)^{\nor}\setminus \{0\}$.
\end{enumerate}
Let $\mathfrak{a}=\{(U_{i},\alpha_{i})\}_{i}$,
$\mathfrak{b}=\{(V_{j},\beta_{j})\}_{j}$
be two elements of $\scr{N}^{X}_{0}$.
We define two operations $+$, $\cdot$
on $\scr{N}^{X}_{0}$ by
\[
\mathfrak{a}+\mathfrak{b}=\mathfrak{a} \cup \mathfrak{b},
\mathfrak{a}\cdot \mathfrak{b}
=\{ (U_{i} \cap V_{j},\alpha_{i}\beta_{j})\}_{ij}
\]
For a pair $(U,\alpha)$,
define $U[\alpha]$ as
\[
U[\alpha]=\{x \in U \mid \rom{$x$ is in the image of 
$\Spec \scr{O}_{X,x}[\alpha^{-1}]\to \Spec \scr{O}_{X,x}$}\}.
\]
%where $\alpha^{-1}=\{a^{-1}\}_{a \in \alpha}$.
For two elements $\mathfrak{a}=\{(U_{i},\alpha_{i})\}_{i}$,
$\mathfrak{b}=\{(V_{j},\beta_{j})\}_{j}$,
the relation $\mathfrak{a} \prec \mathfrak{b}$ holds if
\begin{enumerate}[(a)]
\item $U_{i}[\alpha_{i}] \subset \cup_{j}V_{j}[\beta_{j}]$
for any $i$, and
\item For any $x \in U_{i}[\alpha_{i}]$,
set $J_{x}=\{j \mid x \in V_{j}[\beta_{j}]\}$.
Then $(\beta_{j})_{j \in J_{x}}$ generates the unit ideal in
$\scr{O}_{X,x}^{\nor}[\alpha_{i}^{-1}]$.
\end{enumerate}
Let $\approx$ be the equivalence relation
generated by $\prec$, and set
$\scr{N}^{X}=\scr{N}^{X}_{0}/\approx$.
The addition and multiplication of $\scr{N}^{X}_{0}$
descends to $\scr{N}^{X}$,
which makes $\scr{N}^{X}$ into a distributive lattice.
Set $|X^{\nor}|=\Spec \scr{N}^{X}$.
This is the underlying space of the normalization $X^{\nor}$.
\item[Step 2:]
There is a natural homomorphism $C(X)_{\cpt} \to \scr{N}^{X}$
of distributive lattices, defined by $Z \mapsto \{(Z,1)\}$.
This defines a quasi-compact morphism
$\pi:|X^{\nor}| \to |X|$ of coherent spaces.
\item[Step 3:]
Let $p$ be a point of $|X^{\nor}|$,
and set $x=\pi(p)$.
Then,
\[
\mathfrak{p}=\{ a \in \scr{O}_{X,x}^{\nor} \mid (X,a) \leq p\}
\]
becomes a prime ideal of $\scr{O}_{X,x}^{\nor}$.
Let $R_{p}$ be the localization of $\scr{O}_{X,x}^{\nor}$
by $\mathfrak{p}$.
Then, $R_{p}$ dominates $\scr{O}_{X,x}$.
\item[Step 4:]
The structure sheaf $\scr{O}_{X^{\nor}}$ is defined by
\[
U \mapsto \{ a \in K \mid a \in R_{p} \quad (p \in U)\},
\]
where $K$ is the function field of $X$.
The support morphism $\beta_{X^{\nor}}:
\alpha_{1}\scr{O}_{X^{\nor}}\to \tau_{X^{\nor}}$
is defined by
\[
\alpha_{1}\scr{O}_{X^{\nor}}(U)
\ni (a_{1},\cdots, a_{n}) \mapsto
\{(U,a_{i})\}_{i}.
\]
This defines an $\scr{A}$-scheme
$X^{\nor}=(|X^{\nor}|,\scr{O}_{X^{\nor}},\beta_{X^{\nor}})$.
\item[Step 5:] We have a canonical morphism
of sheaves $\scr{O}_{X} \to \pi_{*}\scr{O}_{X^{\nor}}$,
defined by the identity $a \mapsto a$.
This yields a morphism $\pi:X^{\nor} \to X$ of $\scr{A}$-schemes.
It is of profinite type, by the criterion 4.3.3 in \cite{TakZR}.
\item[Step 6:] Let us show that $\pi$ is proper.

We can see from the construction
that we have a natural morphism $\ZR^{f}(K,X) \to X^{\nor}$:
the morphism $|\ZR^{f}(K,X)| \to |X^{\nor}|$ of underlying spaces
is defined by
\[
\scr{N}^{X} \to \scr{M}^{X} \quad
(\{ (U_{i},\alpha_{i})\}_{i}
 \mapsto \{(X \setminus U_{i},\{\alpha_{i}^{-1}\})\}_{i} ),
\]
where $\scr{M}^{X}=C(\ZR^{f}(K,X))_{\cpt}$,
and the morphism between the structure sheaves is canonical.
Note that $\ZR^{f}(K,X)$ is already normal.
This shows that $X^{\nor}$ is proper over $X$
by the valuative criterion.

\item[Step 7:] We will show that the normalization is a functor.
Let $f:X \to Y$ be a dominant morphism of $\scr{A}$-schemes.
$|f^{\nor}|:X^{\nor} \to Y^{\nor}$ is defined by
\[
\scr{N}^{Y} \to \scr{N}^{X}:
\{ (U_{i},\alpha_{i})\}_{i} \mapsto \{ (f^{-1}U_{i},f^{\#}\alpha_{i})\}_{i}.
\]
The morphism $f^{\#}:\scr{O}_{Y} \to f_{*}\scr{O}_{X}$
extends canonically to $f^{\nor,\#}:\scr{O}_{Y^{\nor}}
\to f^{\nor}_{*}\scr{O}_{X^{\nor}}$.
This gives a functor
$\nor:\cat{Int. $\scr{A}$-Sch} \to \cat{N. $\scr{A}$-Sch}$.

\item[Step 8:] It remains to show that
the normalization functor is indeed the right adjoint
of the underlying functor.
The unit $\epsilon:\Id \Rightarrow \nor \circ U$ is the identity,
since the normalization of a normal $\scr{A}$-scheme
is trivial.
The counit $\eta: U \circ \nor \Rightarrow \Id$ is given
by $\pi$ defined above.
\end{itemize}
\end{proof}
\begin{Rmk}
We know that the integral closure $\tilde{R}$ of a Noetherian domain
$R$ is not Noetherian \cite{Nagata2}.
Therefore, we must drop the `of finite type' condition
from the definition of properness, if we wish to
say that ``the normalization $\Spec \tilde{R}\to \Spec R$ is proper".
\end{Rmk}

\begin{Lem}
\label{lem:normal:local}
Let $X$ be a normal $\scr{A}$-scheme.
Then. $\scr{O}_{X}(U)$ is normal for any open $U$.
\end{Lem}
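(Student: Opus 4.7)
The plan is to reduce normality of the section rings to normality of the stalks by the standard scheme-theoretic argument, carried out inside the function field $K = Q(X)$. Since $X$ is integral, the generic point $\eta$ makes $K = \scr{O}_{X,\eta}$, every section ring $\scr{O}_X(V)$ is a domain, and each restriction map $\scr{O}_X(V) \to \scr{O}_X(W)$ (for $W \subset V$ with $W$ meeting the generic point, i.e.\ $W$ nonempty) is injective because both rings sit inside the common generic stalk $K$. In particular, $\scr{O}_X(U) \hookrightarrow K$ and $Q(\scr{O}_X(U)) \subseteq K$, so it suffices to prove that $\scr{O}_X(U)$ is integrally closed in $K$.

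For the pointwise step, let $a \in K$ be integral over $\scr{O}_X(U)$, witnessed by a monic relation
\[
a^{n} + c_{1} a^{n-1} + \cdots + c_{n} = 0, \qquad c_{i} \in \scr{O}_X(U).
\]
For every $x \in U$, the images of the $c_i$ in the stalk $\scr{O}_{X,x}$ give the same relation viewed inside $K = Q(\scr{O}_{X,x})$; hence $a$ is integral over $\scr{O}_{X,x}$. By the normality hypothesis on $X$, we obtain $a \in \scr{O}_{X,x}$ for every $x \in U$.

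For the gluing step, cover $U$ by quasi-compact opens $\{U_i\}$ (possible because $|X|$ is coherent). For each $x \in U_i$, the condition $a \in \scr{O}_{X,x}$ means that on some quasi-compact open neighborhood $V \subset U_i$ of $x$ there is a section $a_V \in \scr{O}_X(V)$ whose image in $\scr{O}_{X,x}$, hence in $K$, equals $a$. Refining $\{U_i\}$ if necessary, we may assume each $U_i$ admits such a section $a_i \in \scr{O}_X(U_i)$ representing $a$ in $K$. On any overlap $U_i \cap U_j$ we have $a_i = a_j$ inside $K$, so by the injectivity of the restriction to the generic stalk their restrictions to any quasi-compact open of $U_i \cap U_j$ coincide. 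The sheaf axiom for $\scr{O}_X$ then produces a unique $\tilde{a} \in \scr{O}_X(U)$ restricting to $a_i$ on each $U_i$, and hence equal to $a$ as an element of $K$. Thus $a \in \scr{O}_X(U)$.

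The only substantive point is the passage from stalk-level membership to a global section, i.e.\ the gluing across a quasi-compact open cover. This works without modification in the $\scr{A}$-scheme setting because $\scr{O}_X$ is by definition a sheaf on the coherent space $|X|$, and the integrality of $X$ makes every restriction map injective via the common generic stalk $K$; the rest is bookkeeping.
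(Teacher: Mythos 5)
Your proof is correct and follows essentially the same route as the paper's: reduce to integral closure inside the function field $K$, push the monic relation into each stalk $\scr{O}_{X,x}$ to conclude $a \in \scr{O}_{X,x}$ for all $x \in U$, and glue. You merely spell out the gluing step (via the sheaf axiom and injectivity of restrictions into the generic stalk) that the paper leaves implicit in its final ``Hence''.
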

\begin{proof}
Let $b \in K$ be an element which
is integral over $\scr{O}_{X}(U)$,
 where $K$ is the function field of $X$.
Since $b_{x}$ is integral over the stalk $\scr{O}_{X,x}$
for any $x \in U$ and $\scr{O}_{X,x}$ is integrally closed,
we have $b_{x} \in \scr{O}_{X,x}$.
Hence, $b \in \scr{O}_{X}(U)$.
\end{proof}

\begin{Prop}
The normalization functor
coincides with the usual normalization,
when restricted to $\scr{Q}$-schemes.
\end{Prop}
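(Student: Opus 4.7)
The plan is to invoke the uniqueness of right adjoints. If I can show that the classical normalization $\tilde X\to X$, glued from $\Spec R^{\nor}$ over affine opens $\Spec R\subset X$, satisfies the same universal property as $X^{\nor}\to X$ in $\cat{N. $\scr{A}$-Sch}$, then the canonical isomorphism $\tilde X\cong X^{\nor}$ over $X$ follows formally, functorially in $X$.

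First I would verify that $\tilde X$ is well-defined as an object of $\cat{N. $\scr{A}$-Sch}$: integral closure of a domain commutes with localization, so the affine pieces glue into an integral $\scr Q$-scheme; viewed as an $\scr{A}$-scheme, each stalk is a localization of some $R^{\nor}$ and is therefore normal.

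Next I would check the universal property. Given a dominant morphism $f:Y\to X$ with $Y\in\cat{N. $\scr{A}$-Sch}$, I construct $\tilde f : Y\to \tilde X$ affine-locally. For each affine open $\Spec R\subset X$, set $V = f^{-1}(\Spec R)$; by dominance, $f^{\#}: R\hookrightarrow \scr O_{Y}(V)$ is injective, and Lemma~\ref{lem:normal:local} tells me that $\scr O_{Y}(V)$ is integrally closed in $Q(Y)\supseteq Q(X)$. The subring $R^{\nor}\subset Q(X)$ of elements integral over $R$ therefore embeds into $\scr O_{Y}(V)$, and the functorial characterization of $\Spec$ for $\scr{A}$-schemes established in \cite{TakZR} converts this ring map into a morphism $V\to \Spec R^{\nor}$ of $\scr A$-schemes. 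Since $R^{\nor}\subset Q(X)\subset Q(Y)$ is rigidly determined inside $Q(Y)$, the local extensions are forced to be unique and hence automatically agree on overlaps, yielding a unique $\tilde f: Y\to \tilde X$ over $X$.

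The main obstacle will be verifying that these locally built ring maps genuinely assemble into an $\scr{A}$-scheme morphism — that is, that the support morphisms intertwine — and that the gluing is coherent. This is exactly the point at which the affine universality $\Hom_{\cat{$\scr{A}$-Sch}}(Y,\Spec R) = \Hom_{\cat{Rng}}(R,\scr{O}_{Y}(Y))$ from \cite{TakZR} is needed, together with the rigidity provided by Lemma~\ref{lem:normal:local}, which pins down the extension to $R^{\nor}$ uniquely. Once the universal property is in hand, the isomorphism $\tilde X\cong X^{\nor}$ over $X$ is formal by uniqueness of right adjoints, and naturality in $X$ upgrades this to an isomorphism of the two normalization functors restricted to integral $\scr{Q}$-schemes.
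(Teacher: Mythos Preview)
Your proposal is correct and rests on the same two ingredients the paper uses --- Lemma~\ref{lem:normal:local} and the affine adjunction $\Hom_{\cat{$\scr{A}$-Sch}}(Y,\Spec R)\cong\Hom_{\cat{Rng}}(R,\scr O_Y(Y))$ --- but the architecture is different. The paper works in the affine case first: from the universality of $X^{\nor}$ it obtains $f:\Spec A^{\nor}\to X^{\nor}$, and from normality of $\Gamma(X^{\nor},\scr O_{X^{\nor}})$ it obtains the reverse map $g:X^{\nor}\to\Spec A^{\nor}$, then checks $f$ and $g$ are inverse; the passage to general $\scr Q$-schemes is handled by the observation that the explicit construction of $(-)^{\nor}$ commutes with localization. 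Your route instead verifies directly that the classical $\tilde X$ has the required universal property against \emph{all} normal $\scr A$-schemes $Y$, and then invokes uniqueness of right adjoints. What you gain is that you never touch the explicit lattice-theoretic construction of $X^{\nor}$ from Step~1 of the theorem, and the affine-to-global step is absorbed into the gluing of the lifts $V\to\Spec R^{\nor}$; what the paper gains is brevity, since it only has to produce one specific inverse morphism rather than check a mapping property for arbitrary $Y$.
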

\begin{proof}
First, we will show for affine schemes $X=\Spec A$.
The universality of the normalization functor
gives a canonical morphism $f:\Spec (A^{\nor}) \to X^{\nor}$.
Since $\Gamma(X^{\nor},\scr{O}_{X,x})$
is normal, we have a canonical homomorphism 
$A^{\nor} \to \Gamma(X^{\nor},\scr{O}_{X,x})$.
This yields a morphism $g: X^{\nor} \to \Spec (A^{\nor})$.
It is easy to check that these two morphisms $f,g$
are inverse to each other.

It is obvious from the construction that
normalization commutes with localizations.
This shows that the normalization
of any $\scr{Q}$-scheme coincides
with the usual definition.
\end{proof}

\section{Approximations by ordinary schemes}

We fix an integral base $\scr{Q}$-scheme $S$
in the sequel.
The next proposition is pure category-theoretical
and easy, so we will omit the proof.
\begin{Prop}
Let $\mathcal{B},\mathcal{C}$ be two categories,
with $\mathcal{B}$ finite complete
and $\mathcal{C}$ small complete.
Let $F:\mathcal{B} \to \mathcal{C}$
be a finite continuous functor, namely
$F$ preserves fiber products.
For any object $a$ of $\mathcal{C}$,
The followings are equivalent:
\begin{enumerate}[(i)]
\item $a$ is isomorphic to a limit of the objects in $\Imag F$.
\item $a$ is isomorphic to a filtered limit of the objects in $\Imag F$.
\end{enumerate}
\end{Prop}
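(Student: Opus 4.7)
The direction (ii) $\Rightarrow$ (i) is immediate, so the entire content is in (i) $\Rightarrow$ (ii). The plan is to refine an arbitrary small limit representation of $a$ into a filtered one whose terms still lie in $\Imag F$. Suppose $a \simeq \lim_{i \in I} c_{i}$ with $I$ a small index category and each $c_{i} = F(b_{i}) \in \Imag F$ for some $b_{i} \in \mathcal{B}$.

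Next, I would apply the classical ``finite subdiagrams'' trick. Let $\Lambda$ denote the collection of finite subcategories $J \subset I$ ordered by inclusion; this is a filtered poset, since any two finite subcategories are contained in their union. For each $J \in \Lambda$, the finite completeness of $\mathcal{B}$ produces $\beta_{J} := \lim_{j \in J} b_{j}$ in $\mathcal{B}$, and the finite continuity of $F$ gives $F(\beta_{J}) \simeq \lim_{j \in J} F(b_{j}) =: a_{J}$, whence $a_{J} \in \Imag F$. For $J \subset J'$ in $\Lambda$, the universal property of $a_{J}$ supplies canonical restriction maps $a_{J'} \to a_{J}$ that compose correctly, so that $J \mapsto a_{J}$ is a filtered diagram in $\mathcal{C}$.

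Finally, I would identify $\lim_{J \in \Lambda} a_{J}$ with $a$ by a standard cofinality argument: a cone on $(a_{J})_{J \in \Lambda}$ restricts through the singleton subcategories $\{i\} \in \Lambda$ to a cone on $(c_{i})_{i \in I}$, while a cone on $(c_{i})$ assembles, via the universal property of each $a_{J}$, into a cone on $(a_{J})$; these two passages are mutually inverse. The one delicate point---presumably why the author calls the result easy and omits the proof---is that the hypothesis literally asks $F$ only to preserve fiber products, whereas the argument needs preservation of all finite limits. This gap is harmless in the intended setting, where both $\cat{$\scr{Q}$-Sch}$ and $\cat{$\scr{A}$-Sch}$ are taken over the fixed integral base $S$: the terminal object is $S$ in both, tautologically preserved by $F$, and fiber products over $S$ coincide with categorical products, so all finite limits are preserved.
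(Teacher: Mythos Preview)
The paper omits the proof entirely (``pure category-theoretical and easy''), so there is nothing to compare against; your sketch is the standard argument and is essentially correct.

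One additional subtlety you did not flag, of the same flavor as the fiber-product issue you did catch: when you write $\beta_{J}=\lim_{j\in J} b_{j}$ in $\mathcal{B}$, you are implicitly assuming that the structure morphisms of the finite sub-diagram $D|_{J}$ lift from $\mathcal{C}$ to $\mathcal{B}$. The hypothesis only says the \emph{objects} $c_{i}$ lie in $\Imag F$; an arrow $c_{i}\to c_{i'}$ in the diagram need not be of the form $F(g)$ for any $g:b_{i}\to b_{i'}$, so in general there is no diagram $j\mapsto b_{j}$ in $\mathcal{B}$ to take the limit of. What one really needs is that $\Imag F$, viewed as a full subcategory of $\mathcal{C}$, is closed under finite limits; for that it suffices that $F$ be full and finitely continuous. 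As with the terminal-object issue, this is harmless in the intended application, since the paper records that $\cat{$\scr{Q}$-Sch}\to\cat{$\scr{A}$-Sch}$ is fully faithful. It would be worth adding a clause to your final paragraph noting this alongside the other looseness you already identified.
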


\begin{Def}
Let $X$ be an $\scr{A}$-scheme,
and $\scr{P}$ be a class of $\scr{Q}$-schemes.
\begin{enumerate}
\item $X$ is \textit{approximable by $\scr{P}$},
if $X$ is isomorphic to a filtered limit of some objects of $\scr{P}$.
\item $X$ is \textit{approximable},
if $X$ is isomorphic to a filtered limit of some $\scr{Q}$-schemes.
\end{enumerate}
\end{Def}

\begin{Prop}
Any approximable $\scr{A}$-scheme
is approximable by $\scr{Q}$-schemes of finite type.
\end{Prop}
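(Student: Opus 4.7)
\medskip

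\noindent\textbf{Proof plan.} The plan is to combine the preceding proposition with the classical Grothendieck approximation for coherent schemes. Let $X$ be an approximable $\scr{A}$-scheme, so by definition $X \cong \catlim_{i \in I} Y_{i}$ is a filtered limit of $\scr{Q}$-schemes $Y_{i}$. The goal is to rewrite this as a filtered limit of $\scr{Q}$-schemes of finite type.

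First, I would invoke the following classical fact (EGA IV, \S8.2): every coherent scheme, in particular each $Y_{i}$, can be written as a filtered projective limit $Y_{i} \cong \catlim_{j \in J_{i}} Y_{i,j}$ of $\scr{Q}$-schemes of finite type over $\Spec \ZZ$ (or over $S$, once one trivialises the structure maps to $S$), with affine transition morphisms. Inserting this into the presentation of $X$ gives
\[
X \cong \catlim_{i \in I} \catlim_{j \in J_{i}} Y_{i,j},
\]
which, by collapsing the iterated limit in $\cat{$\scr{A}$-Sch}$, exhibits $X$ as a limit (a priori not filtered) of objects lying in the image of the inclusion functor
\[
F:\cat{$\scr{Q}$-Sch}_{\mathrm{ft}} \longrightarrow \cat{$\scr{A}$-Sch}.
\]

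Next, I would apply the preceding proposition to this functor $F$. The category $\cat{$\scr{Q}$-Sch}_{\mathrm{ft}}$ is finite complete: finite type is stable under fibre products and includes the terminal object (over whichever base is in force). The target $\cat{$\scr{A}$-Sch}$ is complete, as recalled in section~1. Moreover, $F$ preserves fibre products: this is inherited from the pullback-preserving embedding $\cat{$\scr{Q}$-Sch} \to \cat{$\scr{A}$-Sch}$ stated in section~1, since the fibre product of finite type $\scr{Q}$-schemes is already of finite type and hence computed inside $\cat{$\scr{Q}$-Sch}_{\mathrm{ft}}$. Thus the proposition applies, and the implication (i)$\Rightarrow$(ii) upgrades the presentation of $X$ as an unrestricted limit of finite-type $\scr{Q}$-schemes to a filtered such presentation, which is exactly the desired approximability by $\scr{Q}$-schemes of finite type.

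The only non-routine ingredient is the first step, namely that every $\scr{Q}$-scheme is a filtered limit of finite type $\scr{Q}$-schemes; this is not a formal consequence of the $\scr{A}$-scheme formalism but a genuine geometric input drawn from Grothendieck's approximation theory for qcqs schemes. Once this is in hand, the rest is a formal application of the previous proposition, with the verification that the inclusion $\cat{$\scr{Q}$-Sch}_{\mathrm{ft}} \hookrightarrow \cat{$\scr{A}$-Sch}$ preserves fibre products being the only technical check.
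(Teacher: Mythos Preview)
Your argument is correct and follows the same overall strategy as the paper: both reduce to the assertion that every $\scr{Q}$-scheme is a filtered limit of $\scr{Q}$-schemes of finite type, and both (you explicitly, the paper tacitly via its opening ``It suffices to show\ldots'') rely on the preceding proposition to pass from an arbitrary limit presentation to a filtered one. The only substantive difference is how that key geometric fact is established. You import it as a black box from Grothendieck's approximation theory (EGA~IV, \S8.2). The paper instead sketches a self-contained argument: take a finite affine covering $\{\amalg U_{ijk} \rightrightarrows \amalg U_{i}\}$ of the given $\scr{Q}$-scheme, write each $U_{i}$ as a filtered limit of finite-type affines with the open immersions $U_{ijk}\hookrightarrow U_{i}$ spread out compatibly, and then interchange the filtered limit with the finite colimit that glues the cover back together. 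Your route is shorter if one grants EGA; the paper's is more hands-on and exploits the patching-by-finite-colimits formalism available for $\scr{A}$-schemes. Your explicit verification that the inclusion $\cat{$\scr{Q}$-Sch}_{\mathrm{ft}}\hookrightarrow\cat{$\scr{A}$-Sch}$ satisfies the hypotheses of the preceding proposition is a point the paper leaves implicit.
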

\begin{proof}
It suffices to show that
any $\scr{Q}$-scheme is approximable 
by $\scr{Q}$-schemes of finite type.

Let $X$ be any $\scr{Q}$-scheme,
and $\{\amalg U_{ijk} \to \amalg U_{i}\}$ be a finite affine covering of $X$.
Since $U_{ijk} \to U_{i}$
is quasi-compact, $U_{ijk}$ is of finite type over $U_{i}$.
Thus, we have approximations
$U_{i}=\varprojlim_{\lambda}U^{\lambda}_{i}$ and
$U_{ijk}=\varprojlim_{\lambda}U^{\lambda}_{ijk}$
so that $U_{i}^{\lambda}$ and $U_{ijk}^{\lambda}$
are of finite type and $U_{ijk}^{\lambda}\to U_{i}^{\lambda}$
are open immersions. We may also assume that the above limits
are filtered. Since filtered limits and finite colimits commute,
we have
\[
X=\varinjlim_{i}U_{i}
=\varinjlim_{i}\varprojlim_{\lambda}U_{i}^{\lambda}
=\varprojlim_{\lambda}\varinjlim_{i}U_{i}^{\lambda}
\]
and $\varinjlim_{i}U_{i}^{\lambda}$
is a $\scr{Q}$-scheme of finite type.
\end{proof}

\begin{Def}
Let $X,Y$ be two integral $\scr{A}$-schemes.
A morphism $f:X \to Y$ is \textit{birational},
if $f$ induces an isomorphism $Q(X) \simeq Q(Y)$
between the rational function fields.
\end{Def}
\begin{Rmk}
Note that, the morphism being birational does not imply the
existence of an open dense subset $U$ of $X$
such that $U \simeq f(U)$.
\end{Rmk}

\begin{Prop}
Let $X$ be an approximable $\scr{A}$-scheme,
say $X=\varprojlim_{\lambda}X^{\lambda}$
where $X^{\lambda}$'s are $\scr{Q}$-schemes.
\begin{enumerate}
\item If $X$ is reduced,
then $X$ is approximable by reduced $\scr{Q}$-schemes.
\item If $X$ is integral,
then $X$ is approximable by integral $\scr{Q}$-schemes.
\item Further, if the rational function field $Q(X)$
is finitely generated over an integral base $\scr{Q}$-scheme,
then $X$ is approximable by integral $\scr{Q}$-schemes
birational to $X$.
\item If $X$ is normal,
then $X$ is approximable by normal $\scr{Q}$-schemes.
\item If $X$ is proper and approximable by
separated $\scr{Q}$-schemes, then
$X$ is approximable by proper (and of finite type) $\scr{Q}$-schemes.
\end{enumerate}
\end{Prop}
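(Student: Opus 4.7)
The plan is to handle parts (1)--(5) in order, each time refining an existing approximation $X=\varprojlim X^{\lambda}$ by applying a universal construction pointwise and checking that the new limit is still $X$. Parts (1), (2), and (5) follow a common template: the replacement $Y^{\lambda} \hookrightarrow X^{\lambda}$ is a closed immersion, the hypothesized property of $X$ furnishes a compatible factorization $X \to Y^{\lambda}$, and the resulting morphism $X \to \varprojlim Y^{\lambda}$ is then a section of the closed immersion $\varprojlim Y^{\lambda} \hookrightarrow \varprojlim X^{\lambda}=X$, hence an isomorphism. Parts (3) and (4) require slightly different arguments.

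For (1), take $Y^{\lambda}=(X^{\lambda})_{\red}$; reducedness of $X$ factors $X \to X^{\lambda}$ through the reduction. For (2), starting from the reduced approximation of (1), let $Y^{\lambda}$ be the scheme-theoretic image of $X \to X^{\lambda}$, which is reduced and irreducible (hence integral) because $X$ is. For (3), start from the integral approximation of (2) with dominant transition maps $X^{\mu} \to X^{\lambda}$; at the generic point $\scr{O}_{X,\eta}=\varinjlim \scr{O}_{X^{\lambda},\eta^{\lambda}}$, so $Q(X)=\varinjlim Q(X^{\lambda})$. Finite generation of $Q(X)$ over $Q(S)$ forces a finite set of generators to lie in some $Q(X^{\lambda_{0}})$, so $Q(X^{\lambda})=Q(X)$ on the cofinal subsystem $\lambda \geq \lambda_{0}$, which is the desired birational approximation.

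For (4), start from the integral approximation of (2) and replace $X^{\lambda}$ by $(X^{\lambda})^{\nor}$ (a $\scr{Q}$-scheme by the Proposition identifying the two normalizations on $\scr{Q}$-schemes). By the adjunction of the normalization Theorem, normality of $X$ yields canonical compatible morphisms $X \to (X^{\lambda})^{\nor}$. The identity $X=\varprojlim (X^{\lambda})^{\nor}$ is then verified affine-locally on compatible patches $\Spec A^{\lambda} \subset X^{\lambda}$: since integral closure commutes with filtered colimits of rings and normality of $X$ says $\varinjlim A^{\lambda}$ is integrally closed in its fraction field, one gets $\varinjlim (A^{\lambda})^{\nor}=(\varinjlim A^{\lambda})^{\nor}=\varinjlim A^{\lambda}$.

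The main obstacle is (5), where one must manufacture properness. Using the earlier Proposition, we may further refine the approximation so that each $X^{\lambda}$ is separated and of finite type over $S$. Let $Y^{\lambda}$ be the scheme-theoretic image of $X \to X^{\lambda}$; then $Y^{\lambda} \hookrightarrow X^{\lambda}$ is a closed immersion, so $Y^{\lambda}$ is separated of finite type. Because $X \to S$ is proper and $X^{\lambda} \to S$ is separated, the cancellation property gives that $X \to X^{\lambda}$ is proper, and restricting to the closed subscheme $Y^{\lambda}$ makes $X \to Y^{\lambda}$ proper; having dense image and being proper, it is surjective. To promote properness to $Y^{\lambda} \to S$, for any base change $T \to S$ note that $X_{T} \to (Y^{\lambda})_{T}$ is surjective and $X_{T} \to T$ is closed; any closed subset of $(Y^{\lambda})_{T}$ pulls back to a closed subset of $X_{T}$ whose image in $T$ (the same as that of the original) is therefore closed. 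So $Y^{\lambda} \to S$ is universally closed, hence proper (and of finite type), and the common template closes out $X=\varprojlim Y^{\lambda}$.
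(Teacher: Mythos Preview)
Your proposal is correct and follows the same idea as the paper: factor each $X\to X^{\lambda}$ through a suitable universal replacement $Y^{\lambda}$ and observe $X\simeq\varprojlim Y^{\lambda}$. The paper's own proof is extremely terse---it only writes out part (1) (factoring through $(X^{\lambda})_{\red}$) and asserts that the remaining parts are ``all similar''---so your argument is essentially a fleshed-out version of the same approach, with the scheme-theoretic-image and descent-of-properness details for (5) and the colimit/integral-closure check for (4) made explicit where the paper leaves them implicit.
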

\begin{proof}
The proofs are all similar, so let us just see (1).

Since $X$ is reduced, $X \to X^{\lambda}$
factors through the reduced $\scr{Q}$-scheme
$(X^{\lambda})_{\red}$. This shows that
$X\simeq \varprojlim(X^{\lambda})_{\red}$.
\end{proof}

\begin{Prop}
Let $f:X \to Y$ be a morphism of $\scr{A}$-schemes over $S$,
with $X$ approximable and $Y$ a $\scr{Q}$-scheme, of finite type
over $S$. 
\begin{enumerate}
\item Suppose $X$ is a filtered projective limit
$\varprojlim_{\lambda}X^{\lambda}$ of $\scr{Q}$-schemes.
Then, $f$ factors through $X \to X^{\lambda}$
for some $\lambda$.
\item Furthermore, if $X$ is proper over $S$
and approximable by separated $\scr{Q}$-schemes, and 
$Y$ is separated over $S$,
then the above $X^{\lambda}$ can be chosen to be
a proper scheme over $Y$.
\end{enumerate}
\end{Prop}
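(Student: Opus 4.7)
For part (1), the natural plan is a standard limit-of-schemes argument adapted to the $\scr{A}$-scheme setting. Cover $Y$ by finitely many affine opens $V_{j}=\Spec A_{j}$, where each $A_{j}$ is finitely presented over the restriction of $S$ to the relevant affine piece. The preimages $U_{j}=f^{-1}(V_{j})$ form a finite quasi-compact open covering of $X$, and since $|X|=\varprojlim_{\lambda}|X^{\lambda}|$ in the category of coherent spaces, each $U_{j}$ descends to a quasi-compact open $U_{j}^{\lambda}\subseteq X^{\lambda}$ for all sufficiently large $\lambda$, with $\scr{O}_{X}(U_{j})=\varinjlim_{\lambda}\scr{O}_{X^{\lambda}}(U_{j}^{\lambda})$. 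Finite presentation of $A_{j}$ lets one lift the classifying ring map $A_{j}\to\scr{O}_{X}(U_{j})$ to some $\scr{O}_{X^{\lambda}}(U_{j}^{\lambda})$; absorbing finitely many compatibility checks on overlaps by passing further in the filtered system yields the desired factorization $X\to X^{\lambda}\to Y$.

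For part (2), apply (1) to obtain some factorization $X\to X^{\lambda_{0}}\to Y$, where the $X^{\mu}$ may be taken separated by hypothesis. For each $\mu\geq\lambda_{0}$, consider the combined morphism $(g^{\mu},f):X\to X^{\mu}\times_{S}Y$, with $g^{\mu}$ the projection to $X^{\mu}$. Because $X$ is proper over $S$ and $X^{\mu}\times_{S}Y$ is separated over $S$, this morphism is proper, hence closed. Equip its image with the reduced closed subscheme structure to obtain a $\scr{Q}$-scheme $Z^{\mu}\subseteq X^{\mu}\times_{S}Y$ of finite type over $Y$. The transition maps $X^{\mu'}\times_{S}Y\to X^{\mu}\times_{S}Y$ for $\mu'\geq\mu$ send $Z^{\mu'}$ into $Z^{\mu}$, so $\{Z^{\mu}\}$ inherits a filtered system structure with compatible maps $X\to Z^{\mu}$.

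One then checks that $Z^{\mu}\to Y$ is proper: separatedness and finite type are inherited from $X^{\mu}\times_{S}Y\to Y$, and universal closedness follows from the fact that $X\twoheadrightarrow Z^{\mu}$ is surjective while $X\to Y$ is proper (composing $X\to S$ proper with $Y\to S$ separated). For any base change $T\to Y$, the proper surjection $X\times_{Y}T\twoheadrightarrow Z^{\mu}\times_{Y}T$ together with the properness of $X\times_{Y}T\to T$ forces $Z^{\mu}\times_{Y}T\to T$ to be closed. Finally, one identifies $X\simeq\varprojlim_{\mu}Z^{\mu}$: fiber products commute with filtered limits in $\cat{$\scr{A}$-Sch}$, so the system $\{Z^{\mu}\hookrightarrow X^{\mu}\times_{S}Y\}$ embeds into $X\times_{S}Y$ in the limit, and the graph $(\Id,f):X\hookrightarrow X\times_{S}Y$ (a closed immersion by separatedness of $Y/S$) lands in each $Z^{\mu}$ and fills out the limit.

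I expect the main technical obstacle to lie in this last identification: verifying, within the $\scr{A}$-scheme framework where ``scheme-theoretic image'' must be interpreted carefully, that $\varprojlim Z^{\mu}$ really equals the graph rather than a strictly larger reduced closed subscheme. This can most cleanly be handled by working point-wise on the underlying coherent spaces, using the stalk description of the $Z^{\mu}$ in terms of the local rings inherited from $X^{\mu}\times_{S}Y$, and invoking that $X$ is the cofiltered limit of the $X^{\mu}$ at the level of local rings.
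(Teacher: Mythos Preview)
Your argument for part~(1) is the same idea as the paper's, only spelled out more carefully: the paper simply writes ``we may assume $Y$ is affine'' and then uses that $\Gamma(X,\scr{O}_{X})=\varinjlim_{\lambda}\Gamma(X^{\lambda},\scr{O}_{X^{\lambda}})$ together with finite type of $Y$ to lift the map. Your descent of the open cover and the patching on overlaps are exactly the details hidden in that reduction, so here there is no real difference.

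For part~(2) you take a genuinely different and considerably longer route. The paper's proof is two lines: by the preceding proposition (item~(5): a proper $\scr{A}$-scheme approximable by separated $\scr{Q}$-schemes is already approximable by \emph{proper} $\scr{Q}$-schemes over $S$) one may assume each $X^{\lambda}$ is proper over $S$; then part~(1) gives a factorization $X\to X^{\lambda}\to Y$, and since $X^{\lambda}\to S$ is proper and $Y\to S$ is separated, the map $X^{\lambda}\to Y$ is automatically proper. Your construction of the $Z^{\mu}$ as scheme-theoretic images inside $X^{\mu}\times_{S}Y$ is in effect a hands-on reproof of that earlier proposition in this special case. Note that for $\mu\geq\lambda_{0}$ you already have $X^{\mu}\to Y$, so the graph $X^{\mu}\hookrightarrow X^{\mu}\times_{S}Y$ is a closed immersion and your $Z^{\mu}$ is just (the image under this graph of) the closure of the image of $X$ in $X^{\mu}$; the product with $Y$ plays no essential role.

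The ``technical obstacle'' you flag at the end is not a real obstacle. The composite $X\to\varprojlim_{\mu}Z^{\mu}\to\varprojlim_{\mu}X^{\mu}=X$ is the identity, so $\varprojlim_{\mu}Z^{\mu}\to X$ is a split epimorphism; but it is also a monomorphism, being a filtered limit of the closed immersions $Z^{\mu}\hookrightarrow X^{\mu}$. A split epi which is mono is an isomorphism, so $X\simeq\varprojlim_{\mu}Z^{\mu}$ drops out formally without any stalk-by-stalk analysis. Your approach is therefore correct, just more laborious than invoking the proper-approximation result the paper has already recorded.
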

\begin{proof}
\begin{enumerate}
\item We may assume that $Y$ is affine.
Since $Y$ is of finite type and $\Gamma(X, \scr{O}_{X})$
is a filtered colimit of $\Gamma(X_{\lambda}, \scr{O}_{X^{\lambda}})$,
$f$ factors through $X^{\lambda}$ for some $\lambda$.
\item By the above proposition,
we may assume that $X^{\lambda}$'s are proper over the base scheme
$S$. Since $Y$ is separated, the morphism $X^{\lambda} \to Y$ is proper.
\end{enumerate}
\end{proof}

\begin{Thm}
\label{thm:str:sheaf:direct}
Let $f:X \to Y$ be a proper birational morphism,
where
$X$ is an integral $\scr{A}$-scheme
approximable by separated $\scr{Q}$-schemes,
and $Y$ a normal $\scr{Q}$-scheme separated 
and of finite type over $S$.
Then, $f_{*}\scr{O}_{X}=\scr{O}_{Y}$.
\end{Thm}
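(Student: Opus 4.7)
The plan is to reduce to the classical fact that for a proper birational morphism of $\scr{Q}$-schemes with normal target, the pushforward of the structure sheaf agrees with the target's structure sheaf, and then pass to the limit.

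First I would arrange a particularly nice approximating system. By hypothesis $X=\varprojlim_{\lambda}X^{\lambda}$ with $X^{\lambda}$ separated $\scr{Q}$-schemes. Since $X$ is integral and $Q(X)=Q(Y)$ is finitely generated over $S$, parts (2), (3) and (5) of the previous proposition let me replace the $X^{\lambda}$ by integral separated $\scr{Q}$-schemes birational to $Y$, and since $X$ is proper, further by integral $\scr{Q}$-schemes proper and of finite type over $S$. Then the first part of the previous proposition, applied to $f\colon X\to Y$ and the of-finite-type target $Y$, shows that $f$ factors through some $X^{\lambda_{0}}\to Y$; passing to a cofinal subsystem, I obtain for every $\lambda$ a morphism $f^{\lambda}\colon X^{\lambda}\to Y$ compatible with the transition maps, and the second part of that proposition (using that $Y$ is separated over $S$ and $X^{\lambda}$ proper over $S$) ensures that each $f^{\lambda}$ is proper. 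Since $X^{\lambda}$ is integral and birational to $Y$, each $f^{\lambda}$ is proper birational of finite type between integral $\scr{Q}$-schemes.

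Next I would apply the classical result to each $f^{\lambda}$. For an affine open $V=\Spec A\subset Y$, the ring $B_{\lambda}:=\Gamma(f^{\lambda,-1}V,\scr{O}_{X^{\lambda}})$ is a finite $A$-algebra (by properness of finite type on an affine base), sits inside $Q(X^{\lambda})=Q(Y)=Q(A)$, and is therefore integral over $A$; since $A$ is normal (using Lemma~\ref{lem:normal:local}), one gets $B_{\lambda}=A$, i.e.\ $f^{\lambda}_{*}\scr{O}_{X^{\lambda}}=\scr{O}_{Y}$.

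Finally I would take the limit on sections. For any quasi-compact open $U\subset Y$,
\[
\Gamma(U,f_{*}\scr{O}_{X})=\Gamma(f^{-1}U,\scr{O}_{X})=\varinjlim_{\lambda}\Gamma(f^{\lambda,-1}U,\scr{O}_{X^{\lambda}})=\varinjlim_{\lambda}\Gamma(U,\scr{O}_{Y})=\Gamma(U,\scr{O}_{Y}),
\]
using that global sections on an approximable $\scr{A}$-scheme are the filtered colimit of global sections of the approximating $\scr{Q}$-schemes (applied to $f^{-1}U=\varprojlim f^{\lambda,-1}U$). This yields $f_{*}\scr{O}_{X}=\scr{O}_{Y}$.

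The main obstacle I expect is the bookkeeping in the first paragraph: I need the approximating system to simultaneously be integral, proper and of finite type over $S$, birational to $Y$, and to carry compatible maps to $Y$. The individual reductions are each given by the previous propositions, but one must check they can be performed consecutively (i.e.\ that passing to reduced, then birational, then proper hulls produces a single cofiltered system factoring through $Y$); the second step of the proof, once this is in hand, is essentially the classical normal-scheme argument and the last step is a routine commutation of sections with filtered limits.
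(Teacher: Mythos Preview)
Your proof is correct and follows essentially the same approach as the paper's: arrange the approximating system so that $f$ factors through proper birational morphisms $f^{\lambda}:X^{\lambda}\to Y$ of finite type between integral $\scr{Q}$-schemes, apply the classical result at each level (the paper simply cites Hartshorne~III.11.4, whereas you spell out the integrality/normality argument), and then identify $f_{*}\scr{O}_{X}$ with the filtered colimit of the $(f^{\lambda})_{*}\scr{O}_{X^{\lambda}}$. The bookkeeping concern you raise in your final paragraph is legitimate, but the paper treats it in exactly the same condensed way, invoking the two preceding propositions en bloc.
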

\begin{proof}
The previous proposition shows that
$f$ factors through proper morphisms
$f_{\lambda}:X^{\lambda} \to Y$,
where $X=\varprojlim_{\lambda}X^{\lambda}$
and $\{X^{\lambda}\}$ is a filtered system of integral
$\scr{Q}$-schemes, proper birational and of finite type over $Y$.
Since $Y$ is normal,
the usual Zariski's main theorem tells
that $\scr{O}_{Y} \to (f_{\lambda})_{*}\scr{O}_{X^{\lambda}}$
is an isomorphism (Corollary III. 11.4 of \cite{Harts}),
and $f_{*}\scr{O}_{X}$ coincides with the right hand side,
since it is a colimit of $(f_{\lambda})_{*}\scr{O}_{X^{\lambda}}$'s.
\end{proof}

Since ``of profinite type" morphisms
are stable under taking limits,
approximable $\scr{A}$-schemes are necessarily
of profinite type over $S$.
\begin{Thm}
Let $X$ be a normal $\scr{A}$-scheme,
proper and of profinite type over the integral base
$\scr{Q}$-scheme $S$.
Assume that the rational function field
$Q(X)$ is finitely generated over $Q(S)$.
The followings are equivalent:
\begin{enumerate}[(i)]
\item $X$ is approximable by separated $\scr{Q}$-schemes.
\item Let $\mathcal{U}=\{\amalg U_{ijk} \rightrightarrows \amalg U_{i}\}$
be any quasi-compact open covering of $X$.
Then, there exists a refinement
$\amalg V_{ijk} \rightrightarrows \amalg V_{i}$ of $\mathcal{U}$
such that $\Spec \scr{O}_{X}(V_{ijk}) \to \Spec \scr{O}_{X}(V_{i})$
are open immersions.
\end{enumerate}
\end{Thm}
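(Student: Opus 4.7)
The proof splits along the two implications. For (i) $\Rightarrow$ (ii): write $X = \varprojlim_\lambda X^\lambda$ with each $X^\lambda$ a separated $\scr{Q}$-scheme. By the standard descent of quasi-compact opens across a cofiltered limit of coherent spaces, the given covering $\mathcal{U}$ pulls back from a quasi-compact open covering of some $X^{\lambda_0}$. I refine this covering on $X^{\lambda_0}$ to an affine open cover $\{V_i^{\lambda_0}\}$; because $X^{\lambda_0}$ is a separated $\scr{Q}$-scheme, each pairwise intersection $V_i^{\lambda_0} \cap V_j^{\lambda_0}$ is itself affine, and I take this as the single $V_{ij\bullet}^{\lambda_0}$. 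Pulling back to $X$, the sections satisfy $\scr{O}_X(V_\bullet) = \varinjlim_\mu \scr{O}_{X^\mu}(V_\bullet^\mu)$, so $\Spec \scr{O}_X(V_{ijk}) \to \Spec \scr{O}_X(V_i)$ is realized as a cofiltered limit of open immersions of affine schemes with affine transition maps, and is therefore itself an open immersion.

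For (ii) $\Rightarrow$ (i): for each refinement $\mathcal{V} = \{V_i, V_{ijk}\}$ supplied by (ii), I glue the affine $\scr{Q}$-schemes $\Spec \scr{O}_X(V_i)$ along the open subschemes $\Spec \scr{O}_X(V_{ijk})$, using the two open immersions $\Spec \scr{O}_X(V_{ijk}) \rightrightarrows \Spec \scr{O}_X(V_i), \Spec \scr{O}_X(V_j)$ as the gluing data (with the cocycle condition on triple overlaps inherited from the covering of $X$). This yields a $\scr{Q}$-scheme $Y_\mathcal{V}$ together with a canonical morphism $X \to Y_\mathcal{V}$ induced by the universal property of $\Spec$. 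As $\mathcal{V}$ ranges over a cofiltered directed system of such refinements -- any two admit a common refinement by reapplying (ii) to the joint cover -- the $Y_\mathcal{V}$ form a cofiltered system of separated $\scr{Q}$-schemes, and I identify $X \simeq \varprojlim_\mathcal{V} Y_\mathcal{V}$ by noting that every quasi-compact open of $X$ is realized as some $V_i^{\mathcal{V}}$ at a sufficiently fine stage, and every local section of $\scr{O}_X$ on it is then a compatible family in the $Y_\mathcal{V}$ system.

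The main technical obstacle is verifying separatedness of each $Y_\mathcal{V}$: this demands that $\bigsqcup_k \Spec \scr{O}_X(V_{ijk}) \to \Spec \scr{O}_X(V_i) \times_S \Spec \scr{O}_X(V_j)$ be a closed immersion, equivalently that $\scr{O}_X(V_i) \otimes_{\scr{O}_S(S)} \scr{O}_X(V_j) \twoheadrightarrow \prod_k \scr{O}_X(V_{ijk})$ be surjective, which is not immediate from (ii) alone. I intend to handle this by further refining $\mathcal{V}$ so as to exploit the separatedness of $X$ itself (which holds since $X$ is proper over $S$): the map $X \to Y_\mathcal{V} \times_S Y_\mathcal{V}$ factors through the diagonal locus, and a sufficiently fine refinement forces the diagonal of $Y_\mathcal{V}$ to be closed. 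Alternatively, one may replace $Y_\mathcal{V}$ by its separated quotient over $S$ and argue that this operation is cofinally compatible with the limit, so that the approximating system of $\scr{Q}$-schemes can be chosen separated without losing the identification $X = \varprojlim Y_\mathcal{V}$.
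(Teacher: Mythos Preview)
Your $(\mathrm{ii})\Rightarrow(\mathrm{i})$ direction follows the same gluing construction as the paper: patch the $\Spec\scr{O}_X(V_i)$ along the $\Spec\scr{O}_X(V_{ijk})$ to get a $\scr{Q}$-scheme $X(\mathcal{U})$, and then identify $X$ with the inverse limit over all coverings. The paper simply asserts that $X(\mathcal{U})$ is proper ``from the construction''; your hesitation about separatedness is fair, and the fixes you propose are reasonable, but this is not the crux.

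The real gap is in $(\mathrm{i})\Rightarrow(\mathrm{ii})$. Notice that your argument never invokes the hypotheses that $X$ is normal, proper, or that $Q(X)$ is finitely generated over $Q(S)$. That alone should make you suspicious. Concretely: you descend the covering to some $X^{\lambda_0}$ and refine there to affine opens $V_i^{\lambda_0}$, so that at level $\lambda_0$ the maps $\Spec\scr{O}_{X^{\lambda_0}}(V_{ijk}^{\lambda_0})\to\Spec\scr{O}_{X^{\lambda_0}}(V_i^{\lambda_0})$ are open immersions. You then assert that $\Spec\scr{O}_X(V_{ijk})\to\Spec\scr{O}_X(V_i)$ is a cofiltered limit of open immersions. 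But for $\mu>\lambda_0$ the pullbacks $V_i^\mu=\pi_\mu^{-1}(V_i^{\lambda_0})$ are no longer affine in general, the transition maps $V_i^\mu\to V_i^{\mu'}$ need not be affine, and there is no reason for $\Spec\scr{O}_{X^\mu}(V_{ijk}^\mu)\to\Spec\scr{O}_{X^\mu}(V_i^\mu)$ to be an open immersion. So the ``limit of open immersions'' picture is only known at the single stage $\lambda_0$, which is not enough.

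The paper closes this gap with Zariski's Main Theorem. Using the earlier propositions one first upgrades the approximation so that each $X^\lambda$ is \emph{normal}, proper, and of finite type over $S$, and birational to $X$. Then $\pi:X\to X^\lambda$ is proper birational onto a normal target, so by Theorem~\ref{thm:str:sheaf:direct} one has $\pi_*\scr{O}_X=\scr{O}_{X^\lambda}$, i.e.\ $\scr{O}_X(V_i)=\scr{O}_{X^\lambda}(\tilde V_i)$ and $\scr{O}_X(V_{ijk})=\scr{O}_{X^\lambda}(\tilde V_{ijk})$ already at a fixed finite level. The map $\Spec\scr{O}_X(V_{ijk})\to\Spec\scr{O}_X(V_i)$ is then literally the open immersion $\tilde V_{ijk}\hookrightarrow\tilde V_i$ between affine opens of the ordinary scheme $X^\lambda$. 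This is precisely where normality, properness, and finite generation of $Q(X)$ enter, and it is the missing idea in your argument.
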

\begin{proof}
(i)$\Rightarrow$(ii):
$X$ can be written as a filtered limit
$X=\varprojlim_{\lambda}X^{\lambda}$,
where $X^{\lambda}$'s are normal $\scr{Q}$-schemes,
proper and of finite type over $S$.
Since the number of $U_{i}$'s and $U_{ijk}$'s
are finite, $U_{i}=\pi^{-1}\tilde{U}_{i}$,
$U_{ijk}=\pi^{-1}\tilde{U}_{ijk}$ for some $\pi:X \to X^{\lambda}$,
where $\tilde{U}_{i}$'s and $\tilde{U}_{ijk}$'s
are quasi-compact open subsets of $X^{\lambda}$.
Take any refinement $\{\amalg \tilde{V}_{ijk} \to \amalg \tilde{V}_{i}\}$
of $\tilde{\mathcal{U}}=
\{\amalg \tilde{U}_{ijk} \to \amalg \tilde{U}_{i}\}$,
by affine opens $\tilde{V}_{ijk}$ and $\tilde{V}_{i}$.
Set $V_{ijk}=\pi^{-1}\tilde{V}_{ijk}$
and $V_{i}=\pi^{-1}\tilde{V}_{i}$.
Since $V_{i} \to \tilde{V}_{i}$ is proper
and $\tilde{V}_{i}$ is normal, of finite type,
Theorem \ref{thm:str:sheaf:direct} implies
that $\scr{O}_{X}(V_{i})=\scr{O}_{X^{\lambda}}(\tilde{V}_{i})$.
This shows that $\Spec \scr{O}_{X}(V_{ijk}) \to \Spec \scr{O}_{X}(V_{i})$
are open immersions.

(ii)$\Rightarrow$(i):
For any covering
$\mathcal{U}=\{\amalg U_{ijk} \rightrightarrows \amalg U_{i}\}$
of $X$,
the refinement $\amalg V_{ijk} \rightrightarrows \amalg V_{i}$
gives open immersions
 $\Spec \scr{O}_{X}(V_{ijk}) \to \Spec \scr{O}_{X}(V_{i})$
which patches up to give a $\scr{Q}$-scheme $X(\mathcal{U})$
and the canonical morphism $\pi_{\mathcal{U}}:X \to X(\mathcal{U})$.
The covering $\mathcal{U}$ is a pull back
of a covering of $X(\mathcal{U})$,
and ditto for the elements of $\scr{O}_{X}(U_{i})$'s.
From this observation, we see that the induced morphism
$X \to \varprojlim_{\mathcal{U}}X(\mathcal{U})$
is an isomorphism.
It is clear from the construction that $X(\mathcal{U})$
is proper.
\end{proof}

\section{Another proof of Nagata embedding}

In the sequel, any $\scr{A}$-schemes are integral.
\begin{Def}
Let $S$ be a $\scr{Q}$-scheme,
and $X$ be a $\scr{Q}$-scheme over $S$.
We say that $X$ is \textit{compactifiable}
over $S$, if there is an open immersion $X \to Y$
where $Y$ is a $\scr{Q}$-scheme,
proper, of finite type over $S$.
\end{Def}

\begin{Prop}
Let $S$ be a $\scr{Q}$-scheme,
and $X$ be a $\scr{Q}$-scheme over $S$.
The followings are equivalent:
\begin{enumerate}[(i)]
\item $X$ is compactifiable over $S$.
\item $\ZR^{f}(X,S)$ is approximable by separated $\scr{Q}$-schemes,
and the natural map $X \to \ZR^{f}(X,S)$ is an open immersion.
\end{enumerate}
\end{Prop}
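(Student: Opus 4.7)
The plan is to treat the two directions separately: for $(i) \Rightarrow (ii)$, the compactification $Y$ will identify $\ZR^f(X,S)$ with the inverse limit of its $X$-admissible modifications; for $(ii) \Rightarrow (i)$, the projection $X \to U^{\mu_0}$ to a suitable approximant of $\ZR^f(X,S)$ will turn out to be an isomorphism.

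For $(i) \Rightarrow (ii)$, suppose $X \hookrightarrow Y$ is an open immersion with $Y$ proper of finite type over $S$. Let $\{Y_\lambda \to Y\}$ be the filtered system of $X$-admissible modifications, namely proper birational morphisms of $\scr{Q}$-schemes of finite type over $S$ restricting to isomorphisms over $X$ (for instance, blow-ups along finitely generated ideals supported in $Y \setminus X$). Each $Y_\lambda$ contains $X$ as an open subscheme, so $W := \varprojlim_\lambda Y_\lambda$ is a proper $\scr{A}$-scheme of profinite type over $S$ receiving $X$; the universal property of $\ZR^f(X,S)$ yields a canonical morphism $\ZR^f(X,S) \to W$. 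I would verify this is an isomorphism by matching points (both sides are parameterized by valuation rings of $K := Q(X)$ dominating local rings of $S$: on the $\ZR^f$ side by construction, on the $W$ side by the valuative criterion of properness applied to each $Y_\lambda$) and structure sheaves via the intersection-of-valuation-rings description. Approximability of $\ZR^f(X,S)$ by the (separated, indeed proper) $\scr{Q}$-schemes $Y_\lambda$ and the openness of $X \to \ZR^f(X,S)$ then follow immediately.

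For $(ii) \Rightarrow (i)$, write $\ZR^f(X,S) = \varprojlim_\mu Z^\mu$ with $Z^\mu$ separated $\scr{Q}$-schemes. By parts (2)--(5) of the earlier proposition strengthening such approximations, I may assume each $Z^\mu$ is integral, normal, birational to $\ZR^f(X,S)$, and proper of finite type over $S$. Since $X$ is a quasi-compact open of the coherent limit, there exist $\mu_0$ and a quasi-compact open $U^{\mu_0} \subset Z^{\mu_0}$ with $X = \pi_{\mu_0}^{-1}(U^{\mu_0})$; writing $U^\mu$ for the preimage of $U^{\mu_0}$ in $Z^\mu$ for $\mu \geq \mu_0$, we have $X = \varprojlim_\mu U^\mu$, with the transitions $U^\mu \to U^{\mu_0}$ proper birational of finite type between normal integral $\scr{Q}$-schemes. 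I would then show each transition is an isomorphism. Cover $U^{\mu_0}$ by affine opens $V^{\mu_0}$ with preimages $V^\mu$; the morphism $V^\mu \to V^{\mu_0}$ is proper of finite type onto an affine target, hence finite, and Theorem \ref{thm:str:sheaf:direct} applied to it (source $V^\mu$ is a $\scr{Q}$-scheme, trivially approximable by separated $\scr{Q}$-schemes; target is normal of finite type) gives $\Gamma(V^\mu, \scr{O}_{V^\mu}) = \Gamma(V^{\mu_0}, \scr{O}_{V^{\mu_0}})$, so $V^\mu = \Spec \Gamma(V^\mu, \scr{O}) = V^{\mu_0}$. Gluing, $U^\mu \simeq U^{\mu_0}$ and thus $X \simeq U^{\mu_0}$ sits as an open subscheme of the proper $\scr{Q}$-scheme $Z^{\mu_0}$ of finite type over $S$.

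The main obstacle will be the identification $\ZR^f(X,S) \simeq \varprojlim Y_\lambda$ in the first direction, which requires carefully reconciling the pushout construction of $\ZR^f(X,S)$ from valuation data with its appearance as an inverse limit of admissible modifications. In the second direction the crucial move is the upgrade ``proper of finite type onto an affine target is finite'', which together with Theorem \ref{thm:str:sheaf:direct} turns the equality of global sections into a genuine scheme-theoretic isomorphism $V^\mu \simeq V^{\mu_0}$ rather than a mere cohomological statement.
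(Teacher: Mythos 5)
Your direction (ii)$\Rightarrow$(i) contains a fatal step. You claim that $V^{\mu}\to V^{\mu_{0}}$ is ``proper of finite type onto an affine target, hence finite''; this implication is false (the blow-up of $\Aff^{2}$ at the origin is proper, birational, of finite type onto an affine target and is not finite, as is $\PP^{1}_{A}\to \Spec A$). Without finiteness, $V^{\mu}$ has no reason to be affine, so the equality of global sections coming from Theorem \ref{thm:str:sheaf:direct} does not give $V^{\mu}\simeq V^{\mu_{0}}$: the same blow-up example has $f_{*}\scr{O}=\scr{O}$ and equal global sections over a normal affine base, yet is not an isomorphism. Thus your plan of proving that the transition maps $U^{\mu}\to U^{\mu_{0}}$ are isomorphisms collapses (and your appeal to birational approximants via part (3) of the earlier proposition also presupposes $Q(X)$ finitely generated over $Q(S)$, which is not among the hypotheses at that point). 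Note also that this is more than is needed: the paper's argument only uses that the quasi-compact open $X\subset \varprojlim_{\mu}Z^{\mu}$ descends to a finite stage, so that $X\to \ZR^{f}(X,S)\to Z^{\mu}$ is already an open immersion into a proper, finite type $\scr{Q}$-scheme for some $\mu$; one never has to compare $U^{\mu}$ with $U^{\mu_{0}}$.

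In direction (i)$\Rightarrow$(ii) your overall strategy (identify $\ZR^{f}(X,S)$ with the limit of $X$-admissible modifications of $Y$) is the same as the paper's, but the verification is left essentially unproved at its hardest point. Saying that points of $W=\varprojlim_{\lambda}Y_{\lambda}$ are ``parameterized by valuation rings of $K$ dominating local rings of $S$'' is not correct as stated (points lying over $X$ are arbitrary points of $X$, on both sides, because of the pushout construction of $\ZR^{f}(X,S)$), and for boundary points the claim that a compatible system $(y_{\lambda})$ yields a valuation ring is precisely the nontrivial content; the valuative criterion only produces a map from valuations to compatible systems, not the converse. The paper supplies exactly the missing argument: for a basic open $U(W,\alpha)$ with $\alpha=\{a_{i}/b_{i}\}$, blow up $Y$ along $(Y\setminus X)\cap \Supp(a_{i},b_{i})$ so that locally $a_{i}/b_{i}$ or $b_{i}/a_{i}$ becomes regular, showing every quasi-compact open of $\ZR^{f}(X,S)$ is pulled back from some admissible blow-up; this gives the homeomorphism (a bijection on points together with a sheaf comparison would not by itself give an isomorphism of $\scr{A}$-schemes without such a topological statement). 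As written, your proposal defers this to ``I would verify,'' so both directions have genuine gaps.
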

\begin{proof}
(i)$\Rightarrow$(ii):
There exists an open immersion $X \to Y$
into a $\scr{Q}$-scheme $Y$, proper of finite type
over $S$. This morphism factors through
$\ZR^{f}(X,S)$ by the universal property.
We will show that for any quasi-compact open subset
$U$ of $\ZR^{f}(X,S)$,
there exists a proper birational morphism
$Y^{\prime} \to Y$, such that $g^{-1}(V)=U$
for some quasi-compact open subset $V$ of $Y^{\prime}$,
where $g:\ZR^{f}(X,S) \to Y^{\prime}$ is 
the canonical extension of $f:\ZR^{f}(X,S) \to Y$:
\[
\xymatrix{
U \ar[r] \ar@{.>}[d] \ar@{}[dr]|{\Box}
& \ZR^{f}(X,S) \ar@{.>}[d]_{g} \ar[rd]^{f} &
X \ar[l] \ar[d] \\
V \ar@{.>}[r] & Y^{\prime} \ar@{.>}[r] & Y
}
\]
By the construction of $\ZR^{f}(X,S)$,
we may assume $U$ is of the form
$U(W, \alpha)$, where $W$ is a quasi-compact open
subset of $S$ and $\alpha$ is a finite subset of $K \setminus \{0\}$,
and 
\[
U(W,\alpha)=\pi^{-1}(W) \cap 
\{ p \in \ZR^{f}(X,S) \mid \alpha \subset \scr{O}_{\ZR^{f}(X,S),p}\}.
\]
Note that $f(U \cap X)$ is open in $Y$, since $X \to \ZR^{f}(X,S)$
is an open immersion.
Suppose $\alpha=\{a_{i}/b_{i}\}_{i}$,
where $a_{i},b_{i} \in \scr{O}_{Y}$ locally.
Let $Y^{\prime} \to Y$ be the blow up
along $(Y\setminus X) \cap \Supp(a_{i},b_{i})$.
Then, either $a_{i}/b_{i}$ or $b_{i}/a_{i}$
is in $\scr{O}_{Y^{\prime}}$ locally,
which shows that the domain of $a_{i}/b_{i}$
is open in $Y^{\prime}$.
This shows that $U$ is the pull back
of some $V$ by the morphism $g:\ZR^{f}(X,S) \to Y^{\prime}$.
Hence, $\ZR^{f}(X,S) \to \varprojlim_{\lambda} Y^{\lambda}$
becomes a homeomorphism on the underlying space,
where $Y^{\infty}=\varprojlim_{\lambda}Y^{\lambda}$
is the filtered projective limit of $X$-admissible blow-ups
of $Y$. A similar argument shows that
$\scr{O}_{Y^{\infty}} \to \scr{O}_{\ZR^{f}(X,S)}$
also becomes an isomorphism.
Note that $Y^{\lambda}$'s are separated over $S$,
since we only used blow-ups.

(ii)$\Rightarrow$(i):
The Zariski-Riemann space $\ZR^{f}(X,S)$
can be written as a form $\varprojlim_{\lambda}Y^{\lambda}$,
where $Y^{\lambda}$'s are proper, of finite type $\scr{Q}$-schemes.
Since $X \to \ZR^{f}(X,S)$ is an open immersion
and $X$ is quasi-compact, $X \to \ZR^{f}(X,S) \to Y^{\lambda}$
becomes an open immersion for some $\lambda$.
\end{proof}

Now, we are on the stage to give the proof of the Nagata embedding.
\begin{Thm}[Nagata]
Let $S$ be a $\scr{Q}$-scheme,
and $X$ be a $\scr{Q}$-scheme,
separated and of finite type over $S$.
Then, $X$ is compactifiable over $S$.
\end{Thm}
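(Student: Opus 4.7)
The plan is to invoke the preceding proposition: it suffices to verify that (a) the natural map $X \to \ZR^{f}(X,S)$ is an open immersion, and (b) $\ZR^{f}(X,S)$ is approximable by separated $\scr{Q}$-schemes. Once both are established, compactifiability of $X$ over $S$ follows immediately.

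Condition (a) is essentially free. Since $X \to S$ is separated and of finite type between $\scr{Q}$-schemes, property (1) listed at the end of Section 1 tells us it is strongly of profinite type, i.e. $\ZR^{f}(K,X) \to \ZR^{f}(K,S)$ is an open immersion, where $K = Q(X)$. By the pushout construction of $\ZR^{f}(X,S)$ out of $X \leftarrow \ZR^{f}(K,X) \to \ZR^{f}(K,S)$, the open immersion property is inherited by $X \to \ZR^{f}(X,S)$.

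For condition (b), I would invoke the characterization theorem of Section 3 (equivalence of (i) and (ii) for normal proper $\scr{A}$-schemes with finitely generated function field). The hypotheses all hold: $\ZR^{f}(X,S)$ is normal, since each stalk is a valuation ring of $K$ and hence integrally closed; it is proper over $S$ by construction; and its rational function field $K = Q(X)$ is finitely generated over $Q(S)$ because $X$ is of finite type over $S$. Therefore it is enough to check condition (ii) there: any quasi-compact open covering of $\ZR^{f}(X,S)$ admits a refinement $\amalg V_{ijk} \rightrightarrows \amalg V_{i}$ such that $\Spec\scr{O}(V_{ijk}) \to \Spec\scr{O}(V_{i})$ are open immersions.

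The main obstacle is verifying this refinement property, which is the real content of the theorem. I would work with the basic quasi-compact opens $U(W,\alpha) = \pi^{-1}(W) \cap \{p \mid \alpha \subset \scr{O}_{\ZR^{f}(X,S), p}\}$, for $W \subset S$ quasi-compact open and $\alpha \subset K\setminus\{0\}$ finite, which generate the topology. On such a basic open, the section ring is the intersection of all valuation rings in the set, and a finer refinement obtained by adjoining elements of the form $a/b$ with $a,b \in \scr{O}_{S}(W)$ corresponds on the level of spectra to localization, hence to an open immersion. A cofinality argument, handling both the $W$ and $\alpha$ parameters, then produces a refinement of any given cover with the desired property. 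This is exactly the content indicated by Corollary 4.4.6 of \cite{TakZR} cited in the introduction, so once that input is granted the final assembly is purely formal.
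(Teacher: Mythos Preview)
Your argument has a genuine gap at the normality step. The Zariski--Riemann space $\ZR^{f}(X,S)$ is the pushout of $X \leftarrow \ZR^{f}(K,X) \to \ZR^{f}(K,S)$, and you have just argued that $X \to \ZR^{f}(X,S)$ is an open immersion. Hence the stalk of $\ZR^{f}(X,S)$ at a point $x \in X$ is $\scr{O}_{X,x}$, which is not a valuation ring (nor even integrally closed) in general. So the hypothesis ``normal'' in the characterization theorem of Section~3 fails, and you cannot invoke that equivalence. Even if you tried to pass to the normalization, you would then need a separate argument to descend compactifiability back to $X$.

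The second problem is that your verification of condition~(ii) is where the entire content lies, and the sketch you give does not supply it. The claim that a ``cofinality argument'' on basic opens $U(W,\alpha)$ produces the required refinement is precisely the statement that $\ZR^{f}(X,S)$ is a limit of blow-ups of some initial compactification --- but at this point you have no compactification to blow up. Corollary~4.4.6 of \cite{TakZR} gives only the open-immersion statement (your condition~(a)); it says nothing about approximability.

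The paper proceeds quite differently: rather than attacking approximability of $\ZR^{f}(X,S)$ head-on, it runs an induction on the number of affine charts. Affine $X$ of finite type is trivially compactifiable (embed in projective space), so it reduces to showing that if $X = V_{1} \cup V_{2}$ with each $V_{i}$ already compactifiable, then $X$ is too. The key step is a valuation-theoretic lemma: inside $\ZR^{f}(X,S)$, the closures of $V_{1}\setminus V_{2}$ and $V_{2}\setminus V_{1}$ are disjoint (any common point would force a specialization relation between incomparable points of $X$, using that the fibers of $\ZR^{f}(K,S) \to \ZR^{f}(X,S)$ are totally ordered). This gives a two-piece open cover $W_{1} \cup W_{2} = \ZR^{f}(X,S)$ with $W_{1} \cap W_{2} = V_{1} \cap V_{2}$ a $\scr{Q}$-scheme; each $W_{i}$ is then approximable via the compactification of $V_{i}$, and patching finite-type models $Y_{i}$ of the $W_{i}$ along $V_{1}\cap V_{2}$ yields the desired proper $\scr{Q}$-scheme.
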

In this section, we will prove
this theorem for the essential case,
namely when $S$ and $X$ are integral.
This restriction is due to the fact that
we simply haven't established the theorem
of Zariski-Riemann spaces for non-integral schemes.

Since $X$ is quasi-compact,
and affine schemes of finite type over $S$
is obviously compactifiable,
it suffices to prove the following proposition:
\begin{Prop}
Let $V_{1}$ and $V_{2}$ be compactifiable
open sub-$\scr{Q}$-schemes of a $\scr{Q}$-scheme $X$
separated over $S$,
with $X=V_{1} \cup V_{2}$.
Then $X$ is also compactifiable.
\end{Prop}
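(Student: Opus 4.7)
The plan is to invoke the previous proposition: $X$ will be compactifiable over $S$ once we show that $\ZR^{f}(X,S)$ is approximable by separated $\scr{Q}$-schemes (the other condition, that $X \to \ZR^{f}(X,S)$ be an open immersion, is automatic because a separated morphism of finite type between $\scr{Q}$-schemes is strongly of profinite type). Since $V_{1}$ and $V_{2}$ are compactifiable by hypothesis, the same proposition gives approximations $\ZR^{f}(V_{i},S) \simeq \varprojlim_{\lambda} \bar{V}_{i}^{\lambda}$ with each $\bar{V}_{i}^{\lambda}$ a proper $\scr{Q}$-scheme of finite type over $S$.

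First I would construct canonical morphisms $\phi_{i}\colon \ZR^{f}(V_{i},S) \to \ZR^{f}(X,S)$ from the universal property of $\ZR^{f}(V_{i},S)$ applied to the composite $V_{i} \hookrightarrow X \to \ZR^{f}(X,S)$, which lands in a proper $\scr{A}$-scheme of profinite type over $S$. Unwinding the pushout description $\ZR^{f}(V_{i},S) = V_{i} \cup_{\ZR^{f}(K,V_{i})} \ZR^{f}(K,S)$ with $K = Q(X) = Q(V_{i})$, and using that $V_{i} \hookrightarrow X$ is an open immersion and that the valuation rings of $K$ centered on some local ring of $V_{i}$ form an open subset $\ZR^{f}(K,V_{i}) \subset \ZR^{f}(K,X)$, one checks that each $\phi_{i}$ is an open immersion. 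Since $X = V_{1} \cup V_{2}$, every point of $\ZR^{f}(X,S)$ either lies in some $V_{i}$ or corresponds to a valuation ring centered on some local ring of $X$, hence of some $V_{i}$; so $\phi_{1}$ and $\phi_{2}$ jointly cover $\ZR^{f}(X,S)$, with intersection $\ZR^{f}(V_{1}\cap V_{2},S)$.

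Next I would pass to a cofinal subsystem so that $\{\bar{V}_{1}^{\lambda}\}$ and $\{\bar{V}_{2}^{\lambda}\}$ share a common directed index set $\Lambda$. For each $\lambda$, the open subschemes of $\bar{V}_{i}^{\lambda}$ that pull back to $\phi_{i}^{-1}(\phi_{1}(\ZR^{f}(V_{1},S)) \cap \phi_{2}(\ZR^{f}(V_{2},S))) = \ZR^{f}(V_{1}\cap V_{2},S)$, together with the gluing isomorphism between them, become defined at sufficiently late stages. Gluing $\bar{V}_{1}^{\lambda}$ and $\bar{V}_{2}^{\lambda}$ along this data produces $\scr{Q}$-schemes $\bar{X}^{\lambda}$ with $\ZR^{f}(X,S) \simeq \varprojlim_{\lambda} \bar{X}^{\lambda}$, giving the required approximation and completing the argument by the previous proposition.

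The main obstacle is to ensure that each $\bar{X}^{\lambda}$ is \emph{separated} over $S$: an arbitrary gluing of two separated schemes along a common open can fail to be separated, as in the line with a doubled origin. The separatedness of $X$ over $S$ is equivalent to the diagonal $V_{1}\cap V_{2} \to V_{1}\times_{S}V_{2}$ being a closed immersion; to propagate this to the approximations, I would replace each pair $(\bar{V}_{1}^{\lambda},\bar{V}_{2}^{\lambda})$ by admissible blow-ups so that the scheme-theoretic closure of $V_{1}\cap V_{2}$ inside $\bar{V}_{1}^{\lambda}\times_{S}\bar{V}_{2}^{\lambda}$ has its two projections become isomorphisms over the images of the $\phi_{i}$'s, thereby forcing the gluing diagonal to be closed. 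Such refinements form a cofinal subsystem in the inverse system computing $\ZR^{f}(X,S)$, and carrying them out is the technical heart of the proof.
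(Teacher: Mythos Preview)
Your argument contains a genuine error at the very start: the maps $\phi_{i}\colon \ZR^{f}(V_{i},S)\to\ZR^{f}(X,S)$ are \emph{not} open immersions. Both source and target are proper, irreducible $\scr{A}$-schemes over $S$, so an open immersion between them would have open and closed image and would be an isomorphism; but $\phi_{i}$ is not even injective. Unwinding the pushouts as you do, a point $x\in X\setminus V_{i}$ is a single point of $\ZR^{f}(X,S)$ (it lies in the open subscheme $X$), whereas its preimage in $\ZR^{f}(V_{i},S)$ consists of all valuation rings of $K$ centered at $x$ --- these are distinct points of $\ZR^{f}(K,S)\setminus\ZR^{f}(K,V_{i})$, hence distinct in the pushout $\ZR^{f}(V_{i},S)$. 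So $\phi_{i}$ is a proper birational surjection, and your open cover with intersection $\ZR^{f}(V_{1}\cap V_{2},S)$ does not exist.

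The paper's proof repairs exactly this point, and in doing so also dissolves your ``main obstacle''. Instead of trying to decompose $\ZR^{f}(X,S)$ as a union of the $\ZR^{f}(V_{i},S)$'s, one sets
\[
W_{1}=\ZR^{f}(X,S)\setminus\overline{V_{2}\setminus V_{1}},\qquad
W_{2}=\ZR^{f}(X,S)\setminus\overline{V_{1}\setminus V_{2}}.
\]
A valuation-theoretic argument (using that specializations in $\ZR^{f}(K,S)$ are totally ordered) shows $\overline{V_{1}\setminus V_{2}}\cap\overline{V_{2}\setminus V_{1}}=\emptyset$, so $W_{1}\cup W_{2}=\ZR^{f}(X,S)$ and the $W_{i}$ are quasi-compact. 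The crucial gain is that $W_{1}\cap W_{2}=V_{1}\cap V_{2}$, an honest open subscheme of $X$, not a Zariski--Riemann space. Each $\phi_{i}$ restricts to an isomorphism over $W_{i}$, so the $W_{i}$ are approximable, and one chooses $\scr{Q}$-models $Y_{i}$ of $W_{i}$ that are already isomorphisms on $V_{i}$. The gluing of $Y_{1}$ and $Y_{2}$ is then along $V_{1}\cap V_{2}$ itself, so the separatedness of the glued scheme $Y$ comes directly from the separatedness of $X$ (i.e.\ from $V_{1}\cap V_{2}\hookrightarrow V_{1}\times_{S}V_{2}$ being closed), and no passage to further blow-ups is needed. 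Properness of $Y$ follows from the surjection $\ZR^{f}(X,S)\to Y$.
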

\begin{proof}
Consider $\ZR^{f}(X,S)$.
Since $X$ is separated, of finite type over $S$,
the morphism $X \to \ZR^{f}(X,S)$ is an open
immersion by Corollary 4.4.6 of \cite{TakZR}.
Let $W_{1}$ (resp. $W_{2}$) be the complement
of the closure of $V_{2}\setminus V_{1}$
(resp. $V_{1} \setminus V_{2}$) in $\ZR^{f}(X,S)$.

We can see that $W_{1} \cap W_{2}=V_{1} \cap V_{2}$,
since the interior of the complement of $V_{1} \cup V_{2}$
is empty.
Next, we see that $W_{1} \cup W_{2}=\ZR^{f}(X,S)$.
For this, it suffices to show that 
$\overline{V_{2}\setminus V_{1}}\cap \overline{V_{1} \setminus V_{2}}
=\emptyset$.
Suppose there is a point $p$ in
$\overline{V_{2}\setminus V_{1}}\cap \overline{V_{1} \setminus V_{2}}$.
Since $V_{2} \setminus V_{1}$ and $V_{1} \setminus V_{2}$
are coherent subsets of $\ZR^{f}(X,S)$,
$p$ must be a specialization of some $x_{1} \in V_{2} \setminus V_{1}$
and $x_{2} \in V_{1} \setminus V_{2}$
by Corollary 1.2.8 of \cite{TakZR}.
Since $\ZR^{f}(K,S) \to \ZR^{f}(X,S)$ is surjective,
there are inverse images  $y_{i} \in \ZR^{f}(K,S)$ of $x_{i}$
such that $y_{i}$ specializes to $p$.
The points in $\ZR^{f}(K,S)$ are valuation rings,
hence $y_{2}$ must be the specialization of $y_{1}$,
or the converse. In either cases,
this contradicts to the fact that $x_{1}$ and
$x_{2}$ has no specialization-generalization relations.
This also shows that $W_{1}$ and $W_{2}$ are quasi-compact.
The morphism $p_{1}:\ZR^{f}(V_{1},S) \to \ZR^{f}(X,S)$
induces an isomorphism on $W_{1}$,
hence $W_{1}$ is approximable by $\scr{Q}$-morphisms
of finite type over $S$, ditto for $W_{2}$.

Take any $\scr{Q}$-model $Y_{i}$
of $W_{i}$ (namely, a morphism $\pi_{i}:W_{i} \to Y_{i}$
where $Y_{i}$ is a $\scr{Q}$-scheme)
such that the morphism $\pi_{i}$
induces an isomorphism on $V_{i}$. 
Then, $Y_{1}$ and $Y_{2}$ can be patched along
$\pi_{1}(W_{1} \cap W_{2}) \simeq \pi_{2}(W_{1} \cap W_{2})$
to obtain a $\scr{Q}$-scheme $Y$ of finite type,
and a surjective morphism $\ZR^{f}(X,S)=W_{1} \cup W_{2} \to Y$.
This shows that $Y$ is proper.
\end{proof}

\textit{Acknowledgements}:
The essential part of the proof of the Nagata embedding
is taught by Professor F. Kato.
I was partially supported by the Grant-in-Aid for Young
Scientists (B) \# 23740017.

\textsc{S. Takagi: Department of Mathematics, Faculty of Science,
Kyoto University, Kyoto, 606-8502, Japan}

\textit{E-mail address}: \texttt{takagi@math.kyoto-u.ac.jp}


\begin{thebibliography}{99}
\bibitem[C]{Conrad}
Conrad, B.:
\textit{Deligne's notes on Nagata compactifications},
J. Ramanujan Math. Soc. \textbf{22} (2007), no.3, 205-257
\bibitem[H]{Harts}
Hartshorne, R.:
\textit{Algebraic geometry},
Grad. Text in Math 52 (1977) Springer-Verlag

\bibitem[N1]{Nagata}
Nagata, M.:
\textit{Imbedding of an abstract variety
in a complete variety},
 J. Math. Kyoto Univ. 2-1 (1962) 1-10

\bibitem[N2]{Nagata2}
Nagata, M.:
\textit{Local rings},
Interscience (1962)

\bibitem[S]{Stone}
Johnstone, P.:
\textit{Stone spaces},
Cambridge studies in advanced mathematics 3 
(1982) Cambridge Univ. Press

\bibitem[T]{TakZR}
Takagi, S.:
\textit{$\scr{A}$-schemes and Zariski-Riemann spaces},
preprint, arXiv:math.AG/1101.2796,
to appear in \textit{Rendiconti del Seminario Matematico}
\end{thebibliography}
\end{document}